\definecolor{ikkonzome}{rgb}	{	0.9961	,	0.7569	,	0.7373	}
\definecolor{ishitake}{rgb}	{	0.9961	,	0.6941	,	0.7059	}
\definecolor{momo}{rgb}	        {	0.9961	,	0.6824	,	0.8039	}
\definecolor{kobai}{rgb}	{	0.9412	,	0.4235	,	0.5569	}
\definecolor{nakabeni}{rgb}	{	0.9451	,	0.2706	,	0.4941	}
\definecolor{sakura}{rgb}	{	0.9333	,	0.8353	,	0.8353	}
\definecolor{arazome}{rgb}	{	0.9725	,	0.7216	,	0.7843	}
\definecolor{usubeni}{rgb}	{	0.8471	,	0.4471	,	0.5451	}
\definecolor{hisame}{rgb}	{	0.7451	,	0.4039	,	0.4039	}
\definecolor{toki}{rgb}	        {	0.9569	,	0.6431	,	0.6353	}
\definecolor{sakuranezumi}{rgb}	{	0.6941	,	0.6039	,	0.6078	}
\definecolor{sango}	{rgb}	{	0.8471	,	0.4157	,	0.3725	}
\definecolor{akane}	{rgb}	{	0.7529	,	0.0118	,	0.3451	}
\definecolor{choshun}{rgb}	{	0.7490	,	0.5255	,	0.5255	}
\definecolor{karakurenai}{rgb}	{	0.7373	,	0.0118	,	0.2667	}
\definecolor{enji}{rgb}	        {	0.6275	,	0.0863	,	0.3176	}
\definecolor{keshiaka}{rgb}	{	0.6275	,	0.4275	,	0.4275	}
\definecolor{kokiake}{rgb}	{	0.5059	,	0.0706	,	0.2549	}
\definecolor{jinzamomi}{rgb}	{	0.9098	,	0.4549	,	0.4157	}
\definecolor{mizugaki}{rgb}	{	0.7294	,	0.5529	,	0.4784	}
\definecolor{umenezumi}{rgb}	{	0.5882	,	0.3922	,	0.3882	}
\definecolor{suoko}{rgb}        {	0.5843	,	0.2667	,	0.2431	}
\definecolor{akabeni}{rgb}	{	0.8039	,	0.0784	,	0.3725	}
\definecolor{shinshu}{rgb}	{	0.6431	,	0.0353	,	0.0000	}
\definecolor{azuki}{rgb}	{	0.5255	,	0.0235	,	0.0000	}
\definecolor{ginshu}{rgb}	{	0.7490	,	0.2824	,	0.0588	}
\definecolor{ebicha}{rgb}	{	0.4549	,	0.2706	,	0.2627	}
\definecolor{kuriume}{rgb}	{	0.5843	,	0.3804	,	0.4314	}
\definecolor{akebono}{rgb}	{	0.8902	,	0.5961	,	0.4941	}
\definecolor{hanezu}{rgb}	{	0.7882	,	0.5961	,	0.5373	}
\definecolor{sangoshu}{rgb}	{	0.8196	,	0.5059	,	0.4471	}
\definecolor{shozyohi}{rgb}	{	0.7686	,	0.0000	,	0.0000	}
\definecolor{shikancha}{rgb}	{	0.5569	,	0.3294	,	0.1882	}
\definecolor{kakishibu}{rgb}	{	0.6745	,	0.4078	,	0.3333	}
\definecolor{benikaba}{rgb}	{	0.7137	,	0.3373	,	0.2941	}
\definecolor{benitobi}{rgb}	{	0.6196	,	0.3176	,	0.2706	}
\definecolor{benihihada}{rgb}	{	0.5020	,	0.3137	,	0.2353	}
\definecolor{kurotobi}{rgb}	{	0.3176	,	0.2000	,	0.1490	}
\definecolor{benihi}{rgb}	{	0.8235	,	0.4745	,	0.1922	}
\definecolor{terigaki}{rgb}	{	0.8118	,	0.4627	,	0.1804	}
\definecolor{ake}{rgb}	        {	0.7804	,	0.3098	,	0.1725	}
\definecolor{edocha}{rgb}	{	0.6863	,	0.4353	,	0.2941	}
\definecolor{bengara}{rgb}	{	0.6392	,	0.1569	,	0.0196	}
\definecolor{hihada}{rgb}	{	0.5412	,	0.3412	,	0.2353	}
\definecolor{shishi}{rgb}	{	0.8549	,	0.6863	,	0.5961	}
\definecolor{araishu}{rgb}	{	0.9294	,	0.4902	,	0.4549	}
\definecolor{akago}{rgb}	{	0.8118	,	0.5765	,	0.4275	}
\definecolor{tokigaracha}{rgb}	{	0.7922	,	0.5255	,	0.3686	}
\definecolor{otan}{rgb}	        {	0.8157	,	0.4157	,	0.2235	}
\definecolor{komugi}{rgb}	{	0.8157	,	0.6549	,	0.5098	}
\definecolor{rakuda}{rgb}	{	0.6784	,	0.5255	,	0.4118	}
\definecolor{tsurubami}{rgb}	{	0.6275	,	0.4392	,	0.3961	}
\definecolor{ama}{rgb}	        {	0.7765	,	0.6902	,	0.5843	}
\definecolor{nikkei}{rgb}	{	0.7216	,	0.4667	,	0.3725	}
\definecolor{renga}{rgb}	{	0.6902	,	0.3765	,	0.3098	}
\definecolor{sohi}{rgb}   	{	0.8078	,	0.5098	,	0.2078	}
\definecolor{enshucha}{rgb}	{	0.6706	,	0.4275	,	0.1608	}
\definecolor{karacha}{rgb}	{	0.5765	,	0.4235	,	0.1490	}
\definecolor{kabacha}{rgb}	{	0.6353	,	0.3725	,	0.1569	}
\definecolor{sodenkaracha}{rgb}	{	0.5216	,	0.3490	,	0.1373	}
\definecolor{suzumecha}{rgb}	{	0.4745	,	0.3176	,	0.1255	}
\definecolor{kurikawacha}{rgb}	{	0.4078	,	0.2745	,	0.1098	}
\definecolor{momoshiocha}{rgb}	{	0.3490	,	0.2353	,	0.0902	}
\definecolor{tobi}{rgb}	        {	0.4353	,	0.3098	,	0.1412	}
\definecolor{kurumizome}{rgb}	{	0.6667	,	0.5333	,	0.3333	}
\definecolor{kaba}{rgb}	        {	0.8196	,	0.4588	,	0.1294	}
\definecolor{korosen}{rgb}	{	0.5059	,	0.3843	,	0.1608	}
\definecolor{kogecha}{rgb}	{	0.3333	,	0.2549	,	0.1098	}
\definecolor{kokikuchinashi}{rgb}	{	0.8078	,	0.5922	,	0.3490	}
\definecolor{araigaki}{rgb}	{	0.8157	,	0.5529	,	0.3176	}
\definecolor{taisha}{rgb}	{	0.6353	,	0.4196	,	0.2078	}
\definecolor{akashirotsurubami}{rgb}	{	0.8078	,	0.6118	,	0.4157	}
\definecolor{tonocha}{rgb}	{	0.5922	,	0.4039	,	0.2000	}
\definecolor{sencha}{rgb}	{	0.5255	,	0.3529	,	0.1490	}
\definecolor{sharegaki}{rgb}	{	0.8549	,	0.7098	,	0.4863	}
\definecolor{ko}{rgb}	        {	0.9529	,	0.8275	,	0.6627	}
\definecolor{usugaki}{rgb}	{	0.8627	,	0.7294	,	0.5294	}
\definecolor{koji}{rgb}	        {	0.8275	,	0.6039	,	0.2706	}
\definecolor{umezome}{rgb}	{	0.8667	,	0.7373	,	0.4235	}
\definecolor{beniukon}{rgb}	{	0.8549	,	0.5843	,	0.2549	}
\definecolor{chojicha}{rgb}	{	0.5569	,	0.4000	,	0.1098	}
\definecolor{kenpozome}{rgb}	{	0.3059	,	0.2667	,	0.0627	}
\definecolor{biwacha}{rgb}	{	0.7333	,	0.5294	,	0.1922	}
\definecolor{kohaku}{rgb}	{	0.8039	,	0.5961	,	0.2471	}
\definecolor{usuko}{rgb}	{	0.8745	,	0.7373	,	0.4824	}
\definecolor{kuchiba}{rgb}	{	0.8157	,	0.6157	,	0.3216	}
\definecolor{kincha}{rgb}	{	0.7725	,	0.4902	,	0.2000	}
\definecolor{chozizome}{rgb}	{	0.5686	,	0.3961	,	0.1608	}
\definecolor{kitsune}{rgb}	{	0.6392	,	0.4431	,	0.1804	}
\definecolor{hushizome}{rgb}	{	0.5804	,	0.4353	,	0.1608	}
\definecolor{kyara}{rgb}	{	0.4510	,	0.3373	,	0.1255	}
\definecolor{susutake}{rgb}	{	0.4588	,	0.3529	,	0.1176	}
\definecolor{shirocha}{rgb}	{	0.7529	,	0.6588	,	0.4118	}
\definecolor{odo}{rgb}	        {	0.7137	,	0.6039	,	0.3137	}
\definecolor{ginsusutake}{rgb}	{	0.5608	,	0.4745	,	0.2392	}
\definecolor{kigaracha}{rgb}	{	0.7490	,	0.6196	,	0.2745	}
\definecolor{kobicha}	{rgb}	{	0.5020	,	0.4118	,	0.1765	}
\definecolor{usuki}	{rgb}	{	0.8549	,	0.7804	,	0.4275	}
\definecolor{yamabuki}	{rgb}	{	0.9098	,	0.8000	,	0.2039	}
\definecolor{tamago}	{rgb}	{	0.8275	,	0.7412	,	0.3373	}
\definecolor{hajizome}	{rgb}	{	0.7412	,	0.6039	,	0.2431	}
\definecolor{yamabukicha}{rgb}	{	0.7059	,	0.5765	,	0.2275	}
\definecolor{kuwazome}	{rgb}	{	0.6471	,	0.5255	,	0.2118	}
\definecolor{namakabe}	{rgb}	{	0.5569	,	0.4549	,	0.1843	}
\definecolor{kuchinashi}{rgb}	{	0.8078	,	0.7333	,	0.3843	}
\definecolor{tomorokoshi}{rgb}	{	0.7804	,	0.7020	,	0.2941	}
\definecolor{shirotsurubami}{rgb}	{	0.8745	,	0.8235	,	0.5922	}
\definecolor{kitsurubami}{rgb}	{	0.6863	,	0.6039	,	0.2118	}
\definecolor{toou}	{rgb}	{	0.8353	,	0.7725	,	0.4235	}
\definecolor{hanaba}	{rgb}	{	0.8549	,	0.8000	,	0.4941	}
\definecolor{torinoko}	{rgb}	{	0.8431	,	0.8118	,	0.6902	}
\definecolor{ukon}	{rgb}	{	0.8039	,	0.7216	,	0.3059	}
\definecolor{kikuchiba}	{rgb}	{	0.7765	,	0.6667	,	0.2902	}
\definecolor{rikyushiracha}{rgb}{	0.6627	,	0.6471	,	0.4784	}
\definecolor{rikyucha}	{rgb}	{	0.5176	,	0.5176	,	0.2588	}
\definecolor{aku}	{rgb}	{	0.5294	,	0.5059	,	0.4039	}
\definecolor{higosusutake}{rgb}	{	0.4863	,	0.4510	,	0.3059	}
\definecolor{rokocha}	{rgb}	{	0.4157	,	0.4353	,	0.3686	}
\definecolor{mirucha}	{rgb}	{	0.4471	,	0.4471	,	0.3725	}
\definecolor{natane}	{rgb}	{	0.7216	,	0.6863	,	0.3882	}
\definecolor{kimirucha}	{rgb}	{	0.5373	,	0.5059	,	0.2549	}
\definecolor{uguisucha}	{rgb}	{	0.4118	,	0.3882	,	0.1961	}
\definecolor{nanohana}	{rgb}	{	0.9882	,	0.9882	,	0.3804	}
\definecolor{kariyasu}	{rgb}	{	0.8039	,	0.7686	,	0.3882	}
\definecolor{kihada}	{rgb}	{	0.9608	,	0.9137	,	0.2863	}
\definecolor{zoge}	{rgb}	{	0.8863	,	0.8235	,	0.7216	}
\definecolor{wara}	{rgb}	{	0.7882	,	0.7490	,	0.4706	}
\definecolor{macha}	{rgb}	{	0.6235	,	0.6392	,	0.4863	}
\definecolor{yamabato}	{rgb}	{	0.5137	,	0.5176	,	0.4000	}
\definecolor{mushikuri}	{rgb}	{	0.8196	,	0.8000	,	0.6118	}
\definecolor{aokuchiba}	{rgb}	{	0.6275	,	0.6392	,	0.3451	}
\definecolor{hiwacha}	{rgb}	{	0.6784	,	0.6863	,	0.4196	}
\definecolor{ominaeshi}	{rgb}	{	0.8745	,	0.8863	,	0.4039	}
\definecolor{wasabi}	{rgb}	{	0.5922	,	0.6824	,	0.5765	}
\definecolor{uguisu}	{rgb}	{	0.3333	,	0.4118	,	0.0627	}
\definecolor{hiwa}	{rgb}	{	0.7098	,	0.7216	,	0.2392	}
\definecolor{aoshirotsurubami}{rgb}	{	0.5961	,	0.6471	,	0.4471	}
\definecolor{yanagicha}	{rgb}	{	0.5373	,	0.5725	,	0.3529	}
\definecolor{rikancha}	{rgb}	{	0.3333	,	0.4196	,	0.2863	}
\definecolor{aikobicha}	{rgb}	{	0.2706	,	0.3412	,	0.2353	}
\definecolor{koke}	{rgb}	{	0.4941	,	0.5686	,	0.3922	}
\definecolor{miru}	{rgb}	{	0.1529	,	0.3216	,	0.1843	}
\definecolor{sensai}	{rgb}	{	0.1373	,	0.2863	,	0.1608	}
\definecolor{baiko}	{rgb}	{	0.5529	,	0.6157	,	0.3922	}
\definecolor{iwai}	{rgb}	{	0.3059	,	0.4118	,	0.2784	}
\definecolor{hiwamoegi}	{rgb}	{	0.4431	,	0.6824	,	0.2431	}
\definecolor{yanagisusutake}{rgb}	{	0.2039	,	0.3333	,	0.1255	}
\definecolor{urayanagi}	{rgb}	{	0.5843	,	0.6824	,	0.2431	}
\definecolor{usumoegi}	{rgb}	{	0.4824	,	0.6706	,	0.2275	}
\definecolor{yanagizome}{rgb}	{	0.4353	,	0.6000	,	0.2078	}
\definecolor{moegi}	{rgb}	{	0.3020	,	0.5961	,	0.1882	}
\definecolor{aoni}	{rgb}	{	0.1216	,	0.4196	,	0.2431	}
\definecolor{matsuba}	{rgb}	{	0.1098	,	0.3686	,	0.2118	}
\definecolor{usuao}	{rgb}	{	0.5529	,	0.7059	,	0.6118	}
\definecolor{wakatake}	{rgb}	{	0.3843	,	0.6824	,	0.5059	}
\definecolor{yanaginezumi}{rgb}	{	0.5020	,	0.6078	,	0.5490	}
\definecolor{oitake}	{rgb}	{	0.4157	,	0.5412	,	0.4392	}
\definecolor{sensaimidori}{rgb}	{	0.2549	,	0.3882	,	0.2627	}
\definecolor{midori}	{rgb}	{	0.0000	,	0.4824	,	0.0000	}
\definecolor{byakuroku}	{rgb}	{	0.6078	,	0.7333	,	0.6196	}
\definecolor{sabiseiji}	{rgb}	{	0.5333	,	0.6588	,	0.5804	}
\definecolor{rokusho}	{rgb}	{	0.3373	,	0.6039	,	0.4039	}
\definecolor{tokusa}	{rgb}	{	0.2549	,	0.4706	,	0.3922	}
\definecolor{onandocha}	{rgb}	{	0.2039	,	0.3725	,	0.3098	}
\definecolor{aotake}	{rgb}	{	0.1412	,	0.5176	,	0.4353	}
\definecolor{rikyunezumi}{rgb}	{	0.4157	,	0.5686	,	0.5490	}
\definecolor{birodo}	{rgb}	{	0.1059	,	0.4275	,	0.3608	}
\definecolor{mishiao}	{rgb}	{	0.1098	,	0.4588	,	0.3882	}
\definecolor{aimirucha}	{rgb}	{	0.2510	,	0.4000	,	0.3608	}
\definecolor{tonotya}	{rgb}	{	0.3059	,	0.4941	,	0.4392	}
\definecolor{mizuasagi}	{rgb}	{	0.1569	,	0.6745	,	0.6745	}
\definecolor{seji}	{rgb}	{	0.4745	,	0.6588	,	0.5922	}
\definecolor{seheki}	{rgb}	{	0.0588	,	0.5569	,	0.4824	}
\definecolor{sabitetsu}	{rgb}	{	0.0392	,	0.3294	,	0.2784	}
\definecolor{tetsu}	{rgb}	{	0.0627	,	0.3961	,	0.3961	}
\definecolor{omeshicha}	{rgb}	{	0.0667	,	0.4667	,	0.4667	}
\definecolor{korainando}{rgb}	{	0.0588	,	0.3922	,	0.0392	}
\definecolor{minatonezumi}{rgb}	{	0.4549	,	0.6000	,	0.6118	}
\definecolor{aonibi}	{rgb}	{	0.1804	,	0.3725	,	0.3882	}
\definecolor{tetsuonando}{rgb}	{	0.1961	,	0.4118	,	0.4275	}
\definecolor{mizu}	{rgb}	{	0.5451	,	0.7412	,	0.7608	}
\definecolor{sabiasagi}	{rgb}	{	0.4510	,	0.6000	,	0.6000	}
\definecolor{kamenozoki}{rgb}	{	0.7098	,	0.9020	,	0.8902	}
\definecolor{asagi}	{rgb}	{	0.3020	,	0.6784	,	0.7098	}
\definecolor{shinbashi}	{rgb}	{	0.0000	,	0.6000	,	0.6353	}
\definecolor{sabionando}{rgb}	{	0.2392	,	0.3961	,	0.3843	}
\definecolor{ainezumi}	{rgb}	{	0.2235	,	0.4000	,	0.3843	}
\definecolor{ai}	{rgb}	{	0.2039	,	0.3765	,	0.4314	}
\definecolor{onando}	{rgb}	{	0.1843	,	0.3686	,	0.4000	}
\definecolor{hanaasagi}	{rgb}	{	0.2000	,	0.6196	,	0.7098	}
\definecolor{chigusa}	{rgb}	{	0.1922	,	0.5725	,	0.6745	}
\definecolor{masuhana}	{rgb}	{	0.1569	,	0.4667	,	0.5569	}
\definecolor{hanada}	{rgb}	{	0.0039	,	0.4275	,	0.5373	}
\definecolor{noshimehana}{rgb}	{	0.1412	,	0.4235	,	0.5098	}
\definecolor{omeshionando}{rgb}	{	0.1176	,	0.3529	,	0.4235	}
\definecolor{sora}	{rgb}	{	0.1451	,	0.7216	,	0.8039	}
\definecolor{konpeki}	{rgb}	{	0.0902	,	0.5098	,	0.7333	}
\definecolor{kurotsurubami}{rgb}{	0.0627	,	0.3137	,	0.3451	}
\definecolor{gunjo}	{rgb}	{	0.5098	,	0.7882	,	0.9098	}
\definecolor{kon}	{rgb}	{	0.0000	,	0.2000	,	0.4000	}
\definecolor{kachi}	{rgb}	{	0.0000	,	0.1765	,	0.3490	}
\definecolor{ruri}	{rgb}	{	0.0078	,	0.3922	,	0.6510	}
\definecolor{konjo}	{rgb}	{	0.0039	,	0.3137	,	0.5216	}
\definecolor{rurikon}	{rgb}	{	0.0000	,	0.3020	,	0.5020	}
\definecolor{benimidori}{rgb}	{	0.3373	,	0.4588	,	0.6784	}
\definecolor{konkikyo}	{rgb}	{	0.0039	,	0.0667	,	0.4275	}
\definecolor{hujinezumi}{rgb}	{	0.3216	,	0.3686	,	0.6118	}
\definecolor{benikakehana}{rgb}	{	0.2275	,	0.2471	,	0.5882	}
\definecolor{hujiiro}	{rgb}	{	0.4706	,	0.4863	,	0.7059	}
\definecolor{hutaai}	{rgb}	{	0.3137	,	0.3333	,	0.5608	}
\definecolor{hujimurasaki}{rgb}	{	0.4157	,	0.3333	,	0.5686	}
\definecolor{kikyo}	{rgb}	{	0.3529	,	0.3216	,	0.5725	}
\definecolor{shion}	{rgb}	{	0.5137	,	0.4196	,	0.6784	}
\definecolor{messhi}	{rgb}	{	0.2196	,	0.1765	,	0.3098	}
\definecolor{shikon}	{rgb}	{	0.2510	,	0.1961	,	0.3529	}
\definecolor{kokimurasaki}{rgb}	{	0.3098	,	0.2000	,	0.3765	}
\definecolor{usu}	{rgb}	{	0.7294	,	0.6353	,	0.7843	}
\definecolor{hashita}	{rgb}	{	0.6000	,	0.4549	,	0.6706	}
\definecolor{rindo}	{rgb}	{	0.5922	,	0.5137	,	0.7529	}
\definecolor{sumire}	{rgb}	{	0.3882	,	0.2157	,	0.5922	}
\definecolor{nasukon}	{rgb}	{	0.3216	,	0.2235	,	0.4353	}
\definecolor{murasaki}	{rgb}	{	0.3137	,	0.1765	,	0.4824	}
\definecolor{kurobeni}	{rgb}	{	0.2353	,	0.1294	,	0.3059	}
\definecolor{ayame}	{rgb}	{	0.4275	,	0.1569	,	0.5098	}
\definecolor{benihuji}	{rgb}	{	0.6824	,	0.5333	,	0.6667	}
\definecolor{edomurasaki}{rgb}	{	0.3961	,	0.1529	,	0.4392	}
\definecolor{kodaimurasaki}{rgb}{	0.4353	,	0.2863	,	0.4627	}
\definecolor{shikon}{rgb}       {	0.4118	,	0.2549	,	0.3765	}
\definecolor{hatobanezumi}{rgb}	{	0.4235	,	0.3804	,	0.4392	}
\definecolor{budonezumi}{rgb}	{	0.3412	,	0.2196	,	0.3529	}
\definecolor{ebizome}	{rgb}	{	0.4000	,	0.2235	,	0.3882	}
\definecolor{hujisusutake}{rgb}	{	0.2549	,	0.0824	,	0.2471	}
\definecolor{usuebi}	{rgb}	{	0.6549	,	0.4392	,	0.6039	}
\definecolor{botan}	{rgb}	{	0.6706	,	0.0392	,	0.4353	}
\definecolor{umemurasaki}{rgb}	{	0.6667	,	0.3922	,	0.5176	}
\definecolor{nisemurasaki}{rgb}	{	0.3686	,	0.0000	,	0.3216	}
\definecolor{murasakitobi}{rgb}	{	0.3922	,	0.3255	,	0.3529	}
\definecolor{ususuo}{rgb}	{	0.7569	,	0.4000	,	0.5412	}
\definecolor{suo}{rgb}	        {	0.6941	,	0.0627	,	0.4157	}
\definecolor{kuwanomi}{rgb}	{	0.4196	,	0.0549	,	0.2745	}
\definecolor{nibi}{rgb}	        {	0.4549	,	0.4235	,	0.3961	}
\definecolor{benikeshi}{rgb}	{	0.4118	,	0.3804	,	0.3843	}
\definecolor{shironeri}{rgb}	{	0.9843	,	0.9843	,	0.9843	}
\definecolor{shironezumi}{rgb}	{	0.6471	,	0.6588	,	0.6627	}
\definecolor{ginnezumi}{rgb}	{	0.5255	,	0.5490	,	0.5922	}
\definecolor{sunezumi}{rgb}	{	0.4275	,	0.4392	,	0.4392	}
\definecolor{dobunezumi}{rgb}	{	0.2863	,	0.2941	,	0.2941	}
\definecolor{aisumicha}{rgb}	{	0.2078	,	0.2196	,	0.2510	}
\definecolor{binrojizome}{rgb}	{	0.2118	,	0.0824	,	0.0706	}
\definecolor{sumizome}{rgb}	{	0.2706	,	0.2706	,	0.2706	}
\newcommand{\R}{\mathbb{R}}
\newcommand{\Z}{\mathbb{Z}}
\newcommand{\Span}{\mathrm{span}}
\newcommand{\odeg}{\overline{\deg}\,}
\newcommand{\udeg}{\underline{\deg}\,}
\newcommand{\br}[1]{\left\langle #1 \right\rangle} 
\theoremstyle{definition}
\newtheorem{defi}{Definition}[section]
\newtheorem{theo}[defi]{Theorem}
\newtheorem{lem}[defi]{Lemma}
\newtheorem{rem}[defi]{Remark}
\newtheorem{cor}[defi]{Corollary}
\newtheorem{exam}[defi]{Example}
\numberwithin{equation}{section}
\numberwithin{figure}{section}
\title[]{Span of the Jones polynomials of certain v-adequate virtual links}
\author{Minori Okamura \and Keiichi Sakai}
\address{Faculty of Mathematics, Shinshu University, 3-1-1 Asahi, Matsumoto, Nagano 390-8621, Japan}
\email{17ss104c@gmail.com}
\email{ksakai@math.shinshu-u.ac.jp}
\date{\today}
\begin{document}
\begin{abstract}
It is known that the Kauffman-Murasugi-Thistlethwaite type inequality becomes an equality for any (possibly virtual) adequate link diagram.
We refine this condition.
As an application we obtain a criterion for virtual link diagram with exactly one virtual crossing to represent a properly virtual link.
\end{abstract}
\maketitle
\section{Introduction}
The Kauffman-Murasugi-Thistlethwaite inequality \cite{Kauffman87,Murasugi87,Thistlethwaite87}, KMT inequality for short, is known as an effective tool to estimate, and in some cases determine, the minimal crossing number of (classical) links in terms of the span of the Jones polynomial (or equivalently of the Kauffman bracket polynomial).
The KMT inequality is a strict inequality for some links, and the defect is closely related to the Euler characteristics of the \emph{Turaev surface} \cite{Turaev87} (also known as the \emph{atom} \cite{ManturovIlyutko10}).
Thus the KMT inequality has a refined form involving the Euler characteristics of the Turaev surfaces (Theorem \ref{Atom_ThAtomProperSpan}; see also \cite{BaeMorton03,DFKLS08}).
Moreover it is known that the refined KMT inequality becomes an equality for \emph{adequate} link diagrams.
The proofs of these results seem easier than that of the original KMT inequality, and in fact the refined inequality provides a simple proof of the Tait conjecture \cite{Turaev87}.

In this paper we refine this sufficient condition for (possibly virtual) link diagram under which condition the refined KMT inequality becomes an equality.
We moreover introduce the notion of \emph{v-adequate link diagrams} to be diagrams obtained by virtualizing exactly one real crossing of some adequate diagrams, and as an application we prove that the refined KMT inequality becomes an equality for v-adequate link diagrams with certain condition.
This means that we can determine the span of the Jones polynomials of an adequate diagram and a v-adequate one obtained from the former,
and this allows us to show that one of the span of the Jones polynomials of these links cannot be divided by four.
We therefore obtain a recipe for producing properly virtual links.
The ``certain condition'' is valid for v-adequate diagrams derived from classical adequate diagrams,
and hence we indeed have infinitely many examples to which our criterion is applied.
These examples can be seen as generalizations of Kishino's result \cite{Kishino00}.

This paper is organized as follows.
In \S\ref{IntroVK} we review the Kauffman bracket and the Turaev surface of (possibly virtual) link diagrams.
We introduce the notion of (pseudo-) adequacy in \S\ref{s:Adequacy}, and we prove our main theorems in \S\ref{s:proofs}.

\section{Preliminaries} \label{IntroVK}
We follow the conventions in \cite{Kauffman99} for virtual link diagrams.
%
Two virtual link diagrams $D$ and $D'$ are said to be equivalent
if $D$ can be transformed into $D'$ by a finite sequence of 
Reidemeister moves and virtual Reidemeister moves \cite[Figure 2]{Kauffman99}.

%
%
%

\subsection{The Kauffman bracket polynomial}
The transformations of a virtual link diagram in a neighborhood of a real crossing 
as in Figure \ref{Atom_2_image_AB-splice} are called respectively \emph{A-splice} and \emph{B-splice}.
We draw a dotted line at each spliced real crossing as in Figure \ref{Atom_2_image_AB-splice}, 
which we call a \emph{connecting arc}. 
	\begin{figure}
	\centering
{\unitlength 0.1in%
\begin{picture}(41.0000,8.0000)(11.0000,-32.0000)%
%
\special{pn 8}%
\special{pa 3800 2800}%
\special{pa 4100 2800}%
\special{fp}%
\special{sh 1}%
\special{pa 4100 2800}%
\special{pa 4033 2780}%
\special{pa 4047 2800}%
\special{pa 4033 2820}%
\special{pa 4100 2800}%
\special{fp}%
\put(39.5000,-26.5000){\makebox(0,0){B-splice}}%
%
\special{pn 8}%
\special{pa 2400 2800}%
\special{pa 2100 2800}%
\special{fp}%
\special{sh 1}%
\special{pa 2100 2800}%
\special{pa 2167 2820}%
\special{pa 2153 2800}%
\special{pa 2167 2780}%
\special{pa 2100 2800}%
\special{fp}%
\put(22.5000,-26.5000){\makebox(0,0){A-splice}}%
%
\special{pn 13}%
\special{pa 4800 2500}%
\special{pa 4800 3100}%
\special{dt 0.045}%
%
\special{pn 13}%
\special{pa 1200 2800}%
\special{pa 1800 2800}%
\special{dt 0.045}%
%
\special{pn 13}%
\special{pa 2700 2400}%
\special{pa 3000 2700}%
\special{fp}%
%
\special{pn 13}%
\special{pa 3200 2900}%
\special{pa 3500 3200}%
\special{fp}%
%
\special{pn 13}%
\special{pa 3500 2400}%
\special{pa 2700 3200}%
\special{fp}%
%
\special{pn 13}%
\special{ar 2300 2800 566 566 2.3561945 3.9269908}%
%
\special{pn 13}%
\special{ar 700 2800 566 566 5.4977871 0.7853982}%
%
\special{pn 13}%
\special{ar 4800 2100 500 500 0.6435011 2.4980915}%
%
\special{pn 13}%
\special{ar 4800 3500 500 500 3.7850938 5.6396842}%
\end{picture}}%
	\caption{A-splice and B-splice, and connecting arcs}
	\label{Atom_2_image_AB-splice}
	\end{figure}
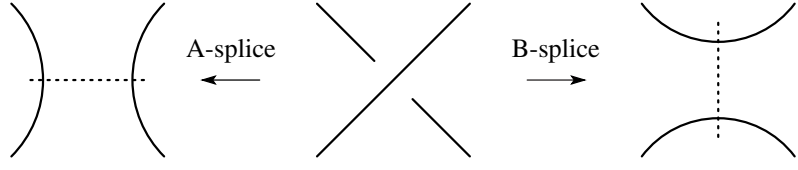

Let $D$ be a virtual link diagram. 
A \emph{state} $s$ of $D$   is a map from the set of the real crossings of $D$ to the set $\{\mathrm{A},\mathrm{B}\}$. 
We denote by $\mathcal{S}$ the set of the states of $D$. 
For $s\in\mathcal{S}$, 
let $D(s)$ be the virtual link diagram obtained from $D$ by performing $s(p)$-splice at each real crossing $p$ of $D$.
Define the \emph{weight} of $s\in\mathcal{S}$, denoted by $\br{D/s}$, by 
	\[
	\br{D/s} \coloneqq  A^{\alpha(s)-\beta(s)} (-A^2-A^{-2})^{\sharp D(s) -1 }\in\Z[A,A^{-1}],
	\]
where $\alpha(s)\coloneqq \sharp s^{-1}(A)$, $\beta(s)\coloneqq \sharp s^{-1}(B)$ and $\sharp D(s)$ is the number of components of $D(s)$.
The \emph{Kauffman bracket polynomial} $\br{D}$ of $D$ is defined by
	\[
	\br{D} \coloneqq  \sum_{s \in \mathcal{S}} \br{D/s}.
	\]
%
%
%
For a Laurent polynomial $f\in\Z[A,A^{-1}]$ we denote the maximal (resp.\ the minimal) degree of $f$ by $\odeg f$ (resp.\ $\udeg f$),
and define $\Span (f)$ as
\[ 
\Span (f)\coloneqq \odeg f-\udeg f.
\]
It is well known that $\Span\br{D}$ is invariant under the generalized Reidemeister moves.
For a virtual link $L$ we define $\Span(L)$ as $\Span\br{D}$ for any diagram $D$ representing $L$. 
The following property is well known.
%
%
\begin{lem}
\label{ClassicalLemMultipleOf4}
If $L$ is a classical link, 
then $\Span(L)$ can be divided by $4$. 
\end{lem}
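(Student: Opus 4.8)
The plan is to show that \emph{every} exponent of $A$ occurring in $\br{D}$ lies in a single residue class modulo $4$, for an arbitrary diagram $D$ representing the classical link $L$. Once this is established, $\odeg\br{D}\equiv\udeg\br{D}\pmod 4$, and since $\Span(L)=\Span\br{D}=\odeg\br{D}-\udeg\br{D}$ by definition, the divisibility follows at once. To make the reduction precise I would attach to each state $s\in\mathcal{S}$ the integer
\[
\phi(s)\coloneqq \bigl(\alpha(s)-\beta(s)\bigr)+2\bigl(\sharp D(s)-1\bigr).
\]
Writing $m\coloneqq \sharp D(s)-1$, the identity $(-A^2-A^{-2})^{m}=(-1)^{m}\sum_{j}\binom{m}{j}A^{2m-4j}$ shows that the factor $(-A^2-A^{-2})^{\sharp D(s)-1}$ contributes only exponents congruent to $2m\pmod 4$. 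Hence every monomial of the weight $\br{D/s}=A^{\alpha(s)-\beta(s)}(-A^2-A^{-2})^{\sharp D(s)-1}$ has $A$-exponent congruent to $\phi(s)$ modulo $4$, and it suffices to prove that $\phi(s)\bmod 4$ is the same for all $s\in\mathcal{S}$.

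The next step is to compare two states $s,s'$ that differ at exactly one real crossing $p$, say $s(p)=\mathrm{A}$ and $s'(p)=\mathrm{B}$. Then $\alpha(s')-\beta(s')=(\alpha(s)-\beta(s))-2$, so the only remaining quantity to control is the change $\sharp D(s')-\sharp D(s)$ in the number of state circles. Here is precisely where I would use the classicality of $L$: choosing $D$ to be a classical (planar) diagram, I remove a small disc about $p$, leaving the four local strand-ends in convex position on the boundary circle. Since the remainder of $D$ is a disjoint union of arcs in a planar region, these four ends are joined in one of the two \emph{non-crossing} pairings, and the splice at $p$ realizes the other non-crossing pairing. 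A direct check of the two cases shows that switching the splice either merges two state circles into one or splits one into two; that is, $\sharp D(s')-\sharp D(s)=\pm 1$. Consequently
\[
\phi(s')-\phi(s)=-2+2\cdot(\pm 1)\in\{\,0,\,-4\,\},
\]
which is $\equiv 0\pmod 4$. As any two states are connected by a chain of single-crossing flips, $\phi$ is constant modulo $4$ on $\mathcal{S}$, completing the argument.

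The main obstacle is exactly the planarity step in the second paragraph: for a general virtual diagram the four strand-ends may be joined by the ``crossing'' pairing, in which case switching the splice leaves $\sharp D(s)$ unchanged and $\phi$ jumps by $2$ modulo $4$. Thus the conclusion genuinely requires $L$ to be classical, and I would emphasize this point, since it is precisely the mechanism the rest of the paper exploits: a carefully placed virtual crossing can force $\Span\br{D}$ to avoid being a multiple of $4$, thereby certifying that the link is properly virtual. I would also record the harmless edge cases (a crossingless diagram gives $\br{D}=(-A^2-A^{-2})^{\sharp D-1}$, whose span $4(\sharp D-1)$ is divisible by $4$, and $\br{D}\neq 0$ for classical $L$ so that $\Span\br{D}$ is well defined).
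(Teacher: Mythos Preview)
Your argument is correct. The key computation---that for a \emph{planar} diagram a single splice change alters $\sharp D(s)$ by exactly $\pm 1$, so that $\phi(s')-\phi(s)\in\{0,-4\}$---is the standard mechanism behind this fact, and your expansion of $(-A^2-A^{-2})^{m}$ cleanly reduces the problem to tracking $\phi\bmod 4$. The remark on why planarity is essential (a virtual crossing can force the ``crossing'' pairing of the four strand-ends, giving $\sharp D(s')=\sharp D(s)$ and hence $\phi(s')-\phi(s)=-2$) is exactly the right diagnosis and dovetails with how the paper later uses Lemma~\ref{ClassicalLemMultipleOf4}.

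As for comparison with the paper: there is nothing to compare. The paper does not prove Lemma~\ref{ClassicalLemMultipleOf4} at all; it merely records it with the phrase ``The following property is well known.'' So you have supplied strictly more than the paper does. If anything, you could trim the discussion of edge cases (the crossingless case already falls under your general argument, since $\mathcal{S}$ is then a singleton), but nothing you wrote is wrong.
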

\subsection{The Turaev surface}

Recall the definition of the Turaev surface of a virtual link diagram $D$ with $c(D) > 0$. 
We replace all the real crossings of $D$ with disks and join these disks with bands, 
each of which corresponds to an arc of $D$, so that the resulting surface $F'(D)$ is orientable and is almost embedded into $\R^2$, 
except for the neighborhoods of virtual crossings 
(Figure \ref{Atom_1_image_Atom}). 
The surface $F'(D)$ with boundary has been introduced in \cite{Kamada04} and is called the \emph{fat frame} of $D$.
    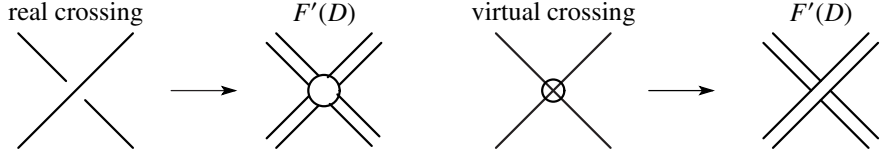
\begin{figure}
	\centering
{\unitlength 0.1in%
\begin{picture}(45.3500,7.7300)(5.6500,-18.0000)%
%
\special{pn 13}%
\special{pa 3100 1800}%
\special{pa 3700 1200}%
\special{fp}%
%
\special{pn 13}%
\special{pa 3700 1800}%
\special{pa 3100 1200}%
\special{fp}%
%
\special{pn 8}%
\special{pa 1400 1500}%
\special{pa 1735 1500}%
\special{fp}%
\special{sh 1}%
\special{pa 1735 1500}%
\special{pa 1668 1480}%
\special{pa 1682 1500}%
\special{pa 1668 1520}%
\special{pa 1735 1500}%
\special{fp}%
\put(9.0000,-11.0000){\makebox(0,0){real crossing}}%
%
\special{pn 13}%
\special{pa 1950 1200}%
\special{pa 2170 1420}%
\special{fp}%
%
\special{pn 13}%
\special{ar 2200 1500 82 82 0.0000000 6.2831853}%
%
\special{pn 13}%
\special{pa 1900 1750}%
\special{pa 2120 1530}%
\special{fp}%
%
\special{pn 13}%
\special{pa 4500 1750}%
\special{pa 5050 1200}%
\special{fp}%
%
\special{pn 13}%
\special{pa 5050 1800}%
\special{pa 4800 1550}%
\special{fp}%
\put(22.0000,-11.0000){\makebox(0,0){$F'(D)$}}%
%
\special{pn 13}%
\special{ar 3400 1500 58 58 0.0000000 6.2831853}%
%
\special{pn 8}%
\special{pa 3900 1500}%
\special{pa 4235 1500}%
\special{fp}%
\special{sh 1}%
\special{pa 4235 1500}%
\special{pa 4168 1480}%
\special{pa 4182 1500}%
\special{pa 4168 1520}%
\special{pa 4235 1500}%
\special{fp}%
\put(34.0000,-11.0000){\makebox(0,0){virtual crossing}}%
\put(48.0000,-11.0000){\makebox(0,0){$F'(D)$}}%
%
\special{pn 13}%
\special{pa 1200 1200}%
\special{pa 600 1800}%
\special{fp}%
%
\special{pn 13}%
\special{pa 600 1200}%
\special{pa 850 1450}%
\special{fp}%
%
\special{pn 13}%
\special{pa 950 1550}%
\special{pa 1200 1800}%
\special{fp}%
%
\special{pn 13}%
\special{pa 1950 1800}%
\special{pa 2170 1580}%
\special{fp}%
%
\special{pn 13}%
\special{pa 2280 1470}%
\special{pa 2500 1250}%
\special{fp}%
%
\special{pn 13}%
\special{pa 2220 1430}%
\special{pa 2450 1200}%
\special{fp}%
%
\special{pn 13}%
\special{pa 1900 1250}%
\special{pa 2120 1470}%
\special{fp}%
%
\special{pn 13}%
\special{pa 2270 1520}%
\special{pa 2490 1740}%
\special{fp}%
%
\special{pn 13}%
\special{pa 2230 1570}%
\special{pa 2450 1790}%
\special{fp}%
%
\special{pn 13}%
\special{pa 4550 1800}%
\special{pa 5100 1250}%
\special{fp}%
%
\special{pn 13}%
\special{pa 5100 1750}%
\special{pa 4850 1500}%
\special{fp}%
%
\special{pn 13}%
\special{pa 4750 1500}%
\special{pa 4500 1250}%
\special{fp}%
%
\special{pn 13}%
\special{pa 4800 1450}%
\special{pa 4550 1200}%
\special{fp}%
\end{picture}}%
	\caption{Fat frame $F'(D)$}
	\label{Atom_1_image_Atom}
   \end{figure}
Around the real crossings of $D$, 
we color $\partial F'(D)$ in checkerboard manner 
so that the components corresponding to curves obtained by A-splice (resp.\ B-splice) is colored by red (resp.\ blue),
as in Figure \ref{image_coloredBands}. 
	\begin{figure}
	\centering
	\includegraphics[scale=0.4]{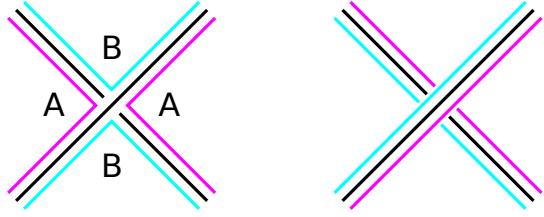}
	\caption{Checkerboard coloring of $F'(D)$ around the real crossings of $D$}
	\label{image_coloredBands}
	\end{figure}
We twist each band of $F'(D)$ if necessary
so that the checkerboard colorings of $\partial F'(D)$ near the real crossings extend consistently to whole boundary,
as in Figure \ref{Atom_1_image_ExamAtom}.
	\begin{figure}
	\includegraphics[scale=0.4]{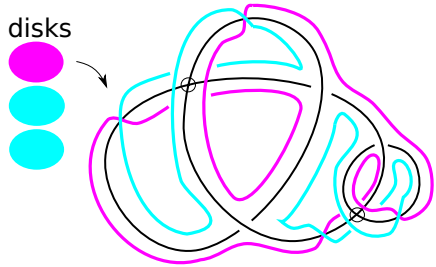}
	\caption{A link diagram with unorientable Turaev surface}
	\label{Atom_1_image_ExamAtom}
	\end{figure}
The resulting surface with boundary is called the \emph{twisted fat frame} of $D$, and denoted by $F(D)$. 
Now let $T_D$ be the closed surface obtained by attaching disks along $\partial F(D)$, and call $T_D$ the Turaev surface of $D$.
%
%
\begin{rem}
By definition the Turaev surface of a trivial link diagram is a union of $S^2$. 
\end{rem}
\begin{rem}
$T_D$ is orientable for any classical link diagram \cite{Turaev87}.
This is not necessarily the case for virtual diagrams; 
see Figure \ref{Atom_1_image_ExamAtom} for example. 
It is not hard to see that $F(D) = F'(D)$, and hence $T_D$ is orientable, if $D$ is alternating.
\end{rem} 
\begin{defi}
For a virtual link diagram $D$, 
let $s_A$ (resp.\ $s_B$) be the state of $D$ 
that maps every real crossing of $D$ to A (resp.\ B). 
\end{defi}
\begin{rem}
\label{Atom2_LemLoopsAsB}
By construction,
the red boundary of $\partial F(D)$ coresspond to $D(s_A)$, 
and the blue boundary of $\partial F(D)$ coresspond to $D(s_B)$. 
Thus we have a one-to-one corespondence between the components of $D(s_A) \cup D(s_B)$ and those of $\partial F(D)$. 
\end{rem}

%
%
\begin{lem}
\label{Atom_LemEulercha}
For a virtual link diagram $D$, 
let $\chi(D)$ be the Euler characteristic of $T_D$. 
Then we have $\sharp D(s_A) + \sharp D(s_B) = \chi(D) + c(D)$.
\end{lem}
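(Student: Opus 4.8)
The plan is to read off $\chi(T_D)$ directly from the fat frame construction and then rearrange the resulting identity. The Turaev surface $T_D$ is obtained from the twisted fat frame $F(D)$ by attaching one disk along each circle of $\partial F(D)$, so $\chi(T_D)$ is controlled by two quantities: the number of boundary circles of $F(D)$, and $\chi(F(D))$ itself. I would compute each of these separately and then combine them.

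For the first quantity, Remark \ref{Atom2_LemLoopsAsB} furnishes a bijection between the components of $\partial F(D)$ and those of $D(s_A) \cup D(s_B)$, so $\partial F(D)$ has exactly $\sharp D(s_A) + \sharp D(s_B)$ circles. Attaching a single disk along one boundary circle raises the Euler characteristic by $1$ (gluing a $2$-cell with $\chi = 1$ along an $S^1$ with $\chi = 0$), whence
\[
\chi(T_D) = \chi(F(D)) + \sharp D(s_A) + \sharp D(s_B).
\]
For the second, I would observe that $F(D)$ deformation retracts onto the $4$-valent graph $G$ whose vertices are the $c(D)$ real crossings of $D$ and whose edges are the arcs of $D$ joining them: each disk retracts to a point and each (possibly twisted) band retracts onto its core arc. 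Since every real crossing is $4$-valent, $G$ has $2c(D)$ edges, so $\chi(F(D)) = \chi(G) = c(D) - 2c(D) = -c(D)$. Substituting this into the displayed identity and rearranging yields $\sharp D(s_A) + \sharp D(s_B) = \chi(D) + c(D)$, as claimed.

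The main point requiring care is the bookkeeping around virtual crossings. Near a virtual crossing the surface is only immersed, the two strands passing through it being disjoint as abstract pieces of $F(D)$; hence virtual crossings contribute neither vertices nor extra edges to the spine $G$, and only the real crossings enter the count. One should also note that the value $\chi(F(D)) = -c(D)$ is insensitive to the twisting of the bands, since the deformation retraction onto $G$ works equally well for twisted and untwisted bands (twisting alters only how the boundary arcs close up, that is, the first computation, and not the homotopy type of $F(D)$). Finally the degenerate case $c(D) = 0$ is consistent with the convention that $T_D$ is then a union of spheres, one per component of $D$, for which the identity reads $2\sharp D = 2\sharp D + 0$.
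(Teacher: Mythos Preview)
Your argument is correct and essentially the same as the paper's: the paper presents it as a single cell decomposition of $T_D$ with $c(D)$ $0$-cells, $2c(D)$ $1$-cells (by $4$-valence), and $\sharp D(s_A)+\sharp D(s_B)$ $2$-cells, whereas you split the computation into $\chi(F(D))=-c(D)$ via the retraction onto the spine $G$ and then add the disks. The extra remarks you include about virtual crossings, twisting, and the $c(D)=0$ case are fine but not strictly needed for the computation.
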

\begin{proof} 
$T_D$ can be decomposed into a cell complex; 
0-cells are in one-one correspondence to the real crossings of $D$, 
1-cells correspond to the arcs of $D$, 
and 2-cells corresspond to the disks attached to $F(D)$ along the boundary. 
The number of arcs is equal to $2c(D)$, 
since $D$ can be seen as a 4-valent graph whose vertices are the real crossings of $D$ and whose edges are the arcs of $D$. 
Since the number of 2-cells is $\sharp D(s_A) + \sharp D(s_B)$ as mentioned in Remark \ref{Atom2_LemLoopsAsB}, 
we have 
	\begin{align*}
	\chi(D) 
	&= c(D) - 2c(D) + \sharp D(s_A)  + \sharp D(s_B)  \\
	&= -c(D) + \sharp D(s_A) + \sharp D(s_B) . \qedhere
	\end{align*}
\end{proof}
%
%

\section{Pseudo-adequate daigrams and v-pseudo-adequate diagrams}\label{s:Adequacy}
%
\begin{defi}
\label{DefSemiAdequate}
A virtual link diagram $D$ is said to be \emph{pseudo-adequate} if, for any $s\in\mathcal{S}$,
\begin{enumerate}[(a)]
\item
	$\sharp D(s_A) \ge \sharp D(s_A(1))$ and
\item
	$\sharp D(s_B) \ge \sharp D(s_B(1))$,
\end{enumerate}
where $s_A(1)$ (resp.\ $s_B(1)$) is a state that sends one real crossing to B (resp.\ A) and all the other crossings to A (resp.\ B); 
for notations see \S\ref{s:proofs}.
\end{defi}
\begin{rem}
A virtual link diagram $D$ is said to be \emph{A-adequate} (resp.\ \emph{B-adequate}) 
if the condition (a) (resp.\ (b)) in Definition \ref{DefSemiAdequate} holds after replacing ``$\ge$'' with ``$>$'',
and \emph{adequate} if $D$ is A-adequate and B-adequate \cite{LickorishThistlethwaite88}.
$D$ is adequate if and only if the four components of $\partial F(D)$ near each real crossing $p$ are all distinct.

Clearly an adequate diagram is pseudo-adequate.
Any pseudo-adequate classical diagram is adequate, 
but this is not the case for virtual diagrams; 
the diagram $H_n$ in Figure \ref{image_Pseudo-adequate} is pseudo-adequate but not adequate,
because $\sharp H_n(s_A)=\sharp H_n(s_A(1))=1$ and $\sharp H_n(s_B)=\sharp H_n(s_B(1))=1$.
\end{rem}
	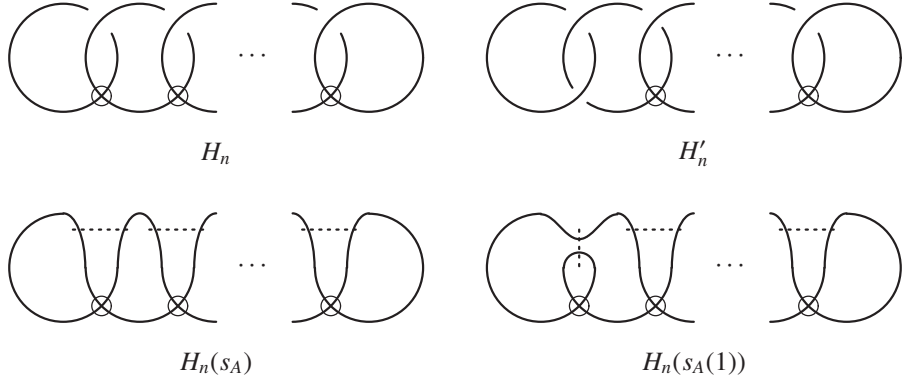
\begin{figure}
	\centering
{\unitlength 0.1in%
\begin{picture}(46.6600,18.1000)(5.1700,-25.2700)%
%
\special{pn 13}%
\special{ar 1600 1000 283 283 1.5707963 4.7123890}%
%
\special{pn 13}%
\special{ar 800 1000 283 283 5.8195377 5.1760366}%
%
\special{pn 8}%
\special{ar 1000 1200 51 51 0.0000000 6.2831853}%
%
\special{pn 13}%
\special{ar 1200 1000 283 283 5.8195377 5.1760366}%
%
\special{pn 13}%
\special{ar 2400 1000 283 283 0.0000000 6.2831853}%
%
\special{pn 8}%
\special{ar 1400 1200 51 51 0.0000000 6.2831853}%
%
\special{pn 13}%
\special{ar 2000 1000 283 283 4.7123890 5.1760366}%
%
\special{pn 13}%
\special{ar 2000 1000 283 283 5.8195377 1.5707963}%
\put(18.0000,-10.0000){\makebox(0,0){$\cdots$}}%
%
\special{pn 8}%
\special{ar 2200 1200 51 51 0.0000000 6.2831853}%
\put(16.0000,-15.0000){\makebox(0,0){$H_n$}}%
%
\special{pn 13}%
\special{ar 800 2100 283 283 6.2831853 4.7123890}%
%
\special{pn 13}%
\special{ar 1600 2100 283 283 1.5707963 3.1415927}%
%
\special{pn 13}%
\special{ar 1200 2100 115 285 3.1415927 6.2831853}%
\put(18.0000,-21.0000){\makebox(0,0){$\cdots$}}%
%
\special{pn 13}%
\special{ar 2400 2100 283 283 4.7123890 3.1415927}%
%
\special{pn 13}%
\special{ar 2000 2100 283 283 6.2831853 1.5707963}%
\put(16.0000,-26.0000){\makebox(0,0){$H_n(s_A)$}}%
%
\special{pn 8}%
\special{ar 1000 2300 51 51 0.0000000 6.2831853}%
%
\special{pn 8}%
\special{ar 1400 2300 51 51 0.0000000 6.2831853}%
%
\special{pn 8}%
\special{ar 2200 2300 51 51 0.0000000 6.2831853}%
%
\special{pn 13}%
\special{ar 3300 2100 283 283 6.2831853 4.7123890}%
%
\special{pn 13}%
\special{ar 3700 2100 283 283 6.2831853 3.1415927}%
%
\special{pn 13}%
\special{ar 4100 2100 283 283 1.5707963 3.1415927}%
\put(43.0000,-21.0000){\makebox(0,0){$\cdots$}}%
%
\special{pn 13}%
\special{ar 4900 2100 283 283 4.7123890 3.1415927}%
%
\special{pn 13}%
\special{ar 3500 2100 80 80 3.1415927 6.2831853}%
%
\special{pn 8}%
\special{ar 3500 2300 51 51 0.0000000 6.2831853}%
%
\special{pn 8}%
\special{ar 3900 2300 51 51 0.0000000 6.2831853}%
%
\special{pn 8}%
\special{ar 4700 2300 51 51 0.0000000 6.2831853}%
\put(41.0000,-26.0000){\makebox(0,0){$H_n(s_A(1))$}}%
\special{pn 13}%
\special{pa 3300 1820}%
\special{pa 3305 1820}%
\special{pa 3320 1823}%
\special{pa 3330 1827}%
\special{pa 3335 1830}%
\special{pa 3340 1832}%
\special{pa 3345 1836}%
\special{pa 3350 1839}%
\special{pa 3370 1855}%
\special{pa 3430 1915}%
\special{pa 3450 1931}%
\special{pa 3455 1934}%
\special{pa 3460 1938}%
\special{pa 3465 1940}%
\special{pa 3470 1943}%
\special{pa 3480 1947}%
\special{pa 3495 1950}%
\special{pa 3505 1950}%
\special{pa 3520 1947}%
\special{pa 3530 1943}%
\special{pa 3535 1940}%
\special{pa 3540 1938}%
\special{pa 3545 1934}%
\special{pa 3550 1931}%
\special{pa 3570 1915}%
\special{pa 3630 1855}%
\special{pa 3650 1839}%
\special{pa 3655 1836}%
\special{pa 3660 1832}%
\special{pa 3665 1830}%
\special{pa 3670 1827}%
\special{pa 3680 1823}%
\special{pa 3695 1820}%
\special{pa 3700 1820}%
\special{fp}%
%
\special{pn 13}%
\special{ar 3300 1000 283 283 5.8195377 5.1760366}%
%
\special{pn 13}%
\special{ar 3700 1000 283 283 2.5535901 5.1760366}%
%
\special{pn 13}%
\special{ar 4100 1000 283 283 1.5707963 4.7123890}%
\put(43.0000,-10.0000){\makebox(0,0){$\cdots$}}%
%
\special{pn 13}%
\special{ar 4500 1000 283 283 4.7123890 5.1760366}%
%
\special{pn 13}%
\special{ar 4500 1000 283 283 5.8195377 1.5707963}%
%
\special{pn 13}%
\special{ar 4900 1000 283 283 0.0000000 6.2831853}%
%
\special{pn 8}%
\special{ar 4700 1200 51 51 0.0000000 6.2831853}%
%
\special{pn 13}%
\special{ar 3700 1000 283 283 5.6951827 2.1587989}%
\put(41.0000,-15.0000){\makebox(0,0){$H'_n$}}%
%
\special{pn 13}%
\special{ar 800 2100 115 285 4.7123890 6.2831853}%
%
\special{pn 13}%
\special{ar 1200 2100 283 283 6.2831853 3.1415927}%
%
\special{pn 13}%
\special{ar 2000 2100 115 285 4.7123890 6.2831853}%
%
\special{pn 13}%
\special{ar 2400 2100 115 285 3.1415927 4.7123890}%
%
\special{pn 13}%
\special{ar 1600 2100 115 285 3.1415927 4.7123890}%
%
\special{pn 13}%
\special{ar 3700 2100 115 285 4.7123890 6.2831853}%
%
\special{pn 13}%
\special{ar 4900 2100 115 285 3.1415927 4.7123890}%
%
\special{pn 13}%
\special{ar 4500 2100 283 283 6.2831853 1.5707963}%
%
\special{pn 13}%
\special{ar 4500 2100 115 285 4.7123890 6.2831853}%
%
\special{pn 13}%
\special{pa 850 1900}%
\special{pa 1150 1900}%
\special{dt 0.045}%
%
\special{pn 13}%
\special{pa 1250 1900}%
\special{pa 1550 1900}%
\special{dt 0.045}%
%
\special{pn 13}%
\special{pa 2050 1900}%
\special{pa 2350 1900}%
\special{dt 0.045}%
%
\special{pn 13}%
\special{pa 3750 1900}%
\special{pa 4050 1900}%
\special{dt 0.045}%
%
\special{pn 13}%
\special{ar 4100 2100 115 285 3.1415927 4.7123890}%
%
\special{pn 13}%
\special{pa 4550 1900}%
\special{pa 4850 1900}%
\special{dt 0.045}%
%
\special{pn 13}%
\special{pa 3500 1900}%
\special{pa 3500 2100}%
\special{dt 0.045}%
%
\special{pn 8}%
\special{ar 3900 1200 51 51 0.0000000 6.2831853}%
\end{picture}}%
	\caption{$H_n$ is pseudo-adequate but not adequate, and $H_n$ is v-pseudo-adequate obtained from $H'_n$}
	\label{image_Pseudo-adequate}
	\end{figure}
%
%
%
%

A \emph{virtualization of a real crossing} is the replacement of the real crossing with a virtual crossing.
\begin{defi}\label{def:v-adequate}
A \emph{v-adequate} (resp.\ \emph{v-pseudo-adequate}) diagram is a diagram $D$
obtained from an adequate (resp.\ a pseudo-adequate) diagram $D'$ by virtualizing one real crossing of $D'$.
\end{defi}

\begin{exam}\label{ex:v-pseudo-adequate}
The diagram $H_n$ in Figure \ref{image_Pseudo-adequate} is v-pseudo-adequate;
indeed $H_n$ can be obtained from $H'_n$ by virtualizing one of its real crossings, 
and $H'_n$ is pseudo-adequate because $\sharp H'_n(s_A)=\sharp H'_n(s_B)=2$ while $\sharp H'_n(s_A(1))$ and $\sharp H'_n(s_B(1))$ are $1$ or $2$. 
\end{exam}

%
%
\begin{defi}\label{DefAdmissible}
For $s\in\mathcal{S}$, 
a connecting arc $\gamma$ in $D(s)$ is said to be \emph{admissible} if
	\begin{enumerate}[(1)]
	\item 
	$\gamma$ connect two distinct components of $D(s)$, or
	
	\item
	both the endpoint of $\gamma$ are on a single component of $D(s)$ 
	and any orientation of the component looks as in Figure \ref{image_admissible}.
	\end{enumerate}
	\begin{figure}
	\input{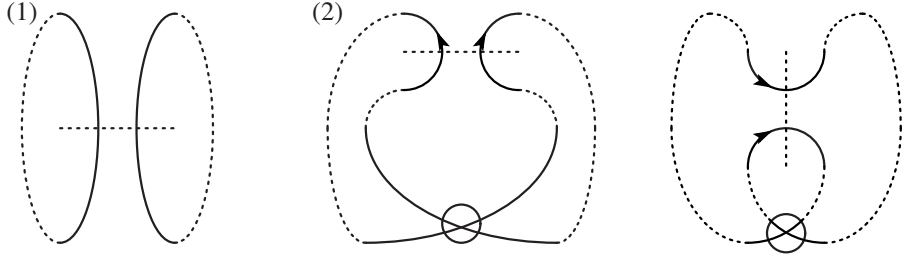}
	\caption{Admissible connecting arcs}
	\label{image_admissible} 
	\end{figure}
\end{defi}

%
\section{The refined KMT inequality}\label{s:proofs}
%
\begin{theo}[\cite{BaeMorton03}, see also \cite{DFKLS08, ManturovIlyutko10}]
\label{Atom_ThAtomProperSpan}
Let $D$ be a virtual link diagram representing a virtual link $L$, 
and $\chi(D)$ be the Euler characteristic of $T_D$. 
Then we have
	\[
	\Span(L) \le 4c(D) + 2(\chi(D) - 2).
	\]
\end{theo}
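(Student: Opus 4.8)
The plan is to bound the maximal and minimal degrees of the Kauffman bracket $\langle D\rangle$ separately and then invoke Lemma~\ref{Atom_LemEulercha}. First I would record the degree of a single state's contribution: since the extreme coefficients of $(-A^2-A^{-2})^k$ are $(-1)^k A^{\pm 2k}$ and never vanish, the summand $\langle D/s\rangle$ has maximal degree $\alpha(s)-\beta(s)+2(\sharp D(s)-1)$ and minimal degree $\alpha(s)-\beta(s)-2(\sharp D(s)-1)$. In particular, evaluating at $s_A$ (where $\alpha=c(D)$, $\beta=0$) and at $s_B$ (where $\alpha=0$, $\beta=c(D)$) gives the candidate extremes $c(D)+2(\sharp D(s_A)-1)$ and $-c(D)-2(\sharp D(s_B)-1)$.

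The key combinatorial input is that if two states differ at exactly one real crossing then their numbers of components differ by exactly $1$: changing the splice at a single crossing either merges two components of $D(s)$ into one or splits one into two, and the virtual crossings, being mere passing artifacts, do not affect this local count. Granting this, I would run a monovariance argument. Travelling from $s_A$ to an arbitrary state $s$ by switching crossings from A to B one at a time, each switch decreases $\alpha-\beta$ by $2$ and changes $\sharp D$ by $\pm1$, so $\alpha(s)-\beta(s)+2\sharp D(s)$ changes by $0$ or $-4$ and is therefore non-increasing. Hence
\[
\alpha(s)-\beta(s)+2(\sharp D(s)-1)\le c(D)+2(\sharp D(s_A)-1)\qquad\text{for every }s,
\]
and since the maximal degree of a sum is at most the maximum of the maximal degrees of its summands, $\odeg\langle D\rangle\le c(D)+2(\sharp D(s_A)-1)$. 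The symmetric argument starting from $s_B$ yields $\udeg\langle D\rangle\ge -c(D)-2(\sharp D(s_B)-1)$.

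Subtracting these two estimates gives
\[
\Span\langle D\rangle\le 2c(D)+2(\sharp D(s_A)+\sharp D(s_B))-4,
\]
and substituting $\sharp D(s_A)+\sharp D(s_B)=\chi(D)+c(D)$ from Lemma~\ref{Atom_LemEulercha} produces exactly $\Span(L)\le 4c(D)+2(\chi(D)-2)$. The hard part will be the loop-counting lemma, and in particular checking that it survives the presence of virtual crossings; by contrast, no cancellation analysis is needed, since for the inequality direction possible cancellation among top-degree state contributions can only lower $\odeg\langle D\rangle$ and hence only helps. Such cancellation is precisely what must be ruled out when one later seeks \emph{equality} for adequate diagrams, but it is irrelevant here.
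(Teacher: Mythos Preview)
Your approach is essentially the paper's: bound $\odeg\langle D\rangle$ and $\udeg\langle D\rangle$ by the extremal-state contributions via an induction on the number of B-splices (your monovariance of $\alpha-\beta+2\sharp D$ is exactly the paper's inequality \eqref{sharpD(sA(j)<sharpD(sA)+j} repackaged), then substitute Lemma~\ref{Atom_LemEulercha}.

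One correction, which is precisely the point you flagged as ``the hard part'': for \emph{virtual} diagrams it is \emph{not} true that switching one splice changes $\sharp D(s)$ by exactly $1$. When both local strands lie on a single component that threads through a virtual crossing, the reconnection can leave the component count unchanged; this is the third row of the paper's Figure~\ref{image_sA(j-1)sA(j)}, and it is why the paper states only $|\sharp D(s_A(j))-\sharp D(s_A(j-1))|\le 1$. Fortunately this does not damage your argument: with a change of $0$ allowed, $\alpha-\beta+2\sharp D$ moves by $0$, $-2$, or $-4$ at each step, so the monovariance and hence the inequality still hold. Just replace ``exactly $1$'' by ``at most $1$'' and drop the sentence asserting that virtual crossings do not affect the count.
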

It is known that the equality holds in Theorem \ref{Atom_ThAtomProperSpan} for adequate diagrams.
This condition can be refined as follows:
\begin{theo}
\label{Atom_SpanPseudoAdequate}
The equality holds in Theorem \ref{Atom_ThAtomProperSpan} for pseudo-adequate diagrams.
\end{theo}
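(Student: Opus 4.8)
The plan is to compute $\odeg\br{D}$ and $\udeg\br{D}$ exactly, by showing that the top-degree term of $\br{D}$ is contributed solely by the all-A state $s_A$ and the bottom-degree term solely by the all-B state $s_B$; the span will then follow from Lemma~\ref{Atom_LemEulercha}. First I would record the extreme degrees of the individual weights. From the definition of $\br{D/s}$, the maximal degree of the monomials coming from a state $s$ is $(\alpha(s)-\beta(s))+2(\sharp D(s)-1)$, and since $\alpha(s)+\beta(s)=c(D)$ this equals $c(D)-2\beta(s)+2(\sharp D(s)-1)$. For $s=s_A$ this is $c(D)+2(\sharp D(s_A)-1)$, with leading coefficient $(-1)^{\sharp D(s_A)-1}\neq 0$; symmetrically the minimal degree of the $s_B$-weight is $-c(D)-2(\sharp D(s_B)-1)$, with nonzero coefficient.

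The combinatorial core is the observation that if two states $s,s'$ differ at exactly one real crossing then $|\sharp D(s)-\sharp D(s')|\le 1$: changing a single splice is a local surgery on the $1$-manifold $D(s)$ whose effect on the number of loops is read off from the way the four local endpoints are joined through the rest of the diagram, and a short case check on the three possible external matchings (including the diagonal one, realised through virtual crossings, for which the loop count is \emph{unchanged}) gives the bound. Iterating along a path that flips crossings from A to B one at a time yields $\sharp D(s)\le \sharp D(s_A)+\beta(s)$ for every $s$, so the maximal degree of every weight is at most $c(D)+2(\sharp D(s_A)-1)$; this reproduces the upper estimate of Theorem~\ref{Atom_ThAtomProperSpan}.

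Now I would invoke pseudo-adequacy to sharpen this to a strict bound for $s\neq s_A$. Given such an $s$, choose a crossing at which $s$ takes the value B and flip it first; condition (a) of Definition~\ref{DefSemiAdequate} gives $\sharp D(s_A(1))\le \sharp D(s_A)$, so this initial step does not increase the loop count, while the remaining $\beta(s)-1$ steps each increase it by at most one. Hence $\sharp D(s)\le \sharp D(s_A)+\beta(s)-1$, whence the maximal degree of the $s$-weight is at most $c(D)+2(\sharp D(s_A)-1)-2$. Thus no weight other than that of $s_A$ reaches degree $c(D)+2(\sharp D(s_A)-1)$, the top coefficient cannot cancel, and $\odeg\br{D}=c(D)+2(\sharp D(s_A)-1)$. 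The symmetric argument with condition (b) and the state $s_B$ gives $\udeg\br{D}=-c(D)-2(\sharp D(s_B)-1)$. Subtracting and applying $\sharp D(s_A)+\sharp D(s_B)=\chi(D)+c(D)$ from Lemma~\ref{Atom_LemEulercha} yields $\Span(L)=4c(D)+2(\chi(D)-2)$, the asserted equality.

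I expect the only delicate point to be the non-cancellation at the two extreme degrees. The general bound $\sharp D(s)\le \sharp D(s_A)+\beta(s)$ merely re-establishes the inequality of Theorem~\ref{Atom_ThAtomProperSpan}, and it is precisely the gain of one unit at the first flip---guaranteed by pseudo-adequacy and made possible in the virtual setting by the count-preserving diagonal case---that pushes every other weight at least two degrees below the extremes and so forces the extremal coefficients to survive. Everything else is bookkeeping with $\alpha(s)$, $\beta(s)$ and $\sharp D(s)$.
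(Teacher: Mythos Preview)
Your argument is correct and follows essentially the same route as the paper: both proofs use the one-step bound $|\sharp D(s)-\sharp D(s')|\le 1$ together with the pseudo-adequacy condition at the first flip to obtain $\sharp D(s)\le \sharp D(s_A)+\beta(s)-1$ for every $s\ne s_A$ (the paper writes this as $\sharp D(s_A(j))\le \sharp D(s_A)+j-1$), and then conclude that the extreme degrees of $\br{D}$ are realised uniquely by $s_A$ and $s_B$. The only cosmetic difference is that you make the non-vanishing of the extremal coefficients explicit, which the paper leaves implicit.
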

The above Theorem \ref{Atom_SpanPseudoAdequate} deduces our main theorem.
\begin{theo}
\label{Atom_SpanVadequate}
Let $D$ be a v-pseudo-adequate virtual link diagram obtained from 
a pseudo-adequate diagram $D'$ by replacing a real crossing $p$ with a virtual crossing $vp$. 
Let $C_A$ and $C_B$ be the components of $D(s_A)$ and $D(s_B)$ including $vp$ respectively. 
If any connecting arcs of $D(s_A)$ (resp.\ $D(s_B)$) 
whose endpoints are both on $C_A$ (resp.\ $C_B$) are admissible, 
then $\Span\br{D} = 4c(D) + 2(\chi(D) -2)$.
\end{theo}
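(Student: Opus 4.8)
The plan is to show that, under the stated hypotheses, the diagram $D$ is itself pseudo-adequate; the equality $\Span\br{D}=4c(D)+2(\chi(D)-2)$ then follows at once from Theorem \ref{Atom_SpanPseudoAdequate}. To see why pseudo-adequacy is the right target, recall the mechanism behind Theorems \ref{Atom_ThAtomProperSpan} and \ref{Atom_SpanPseudoAdequate}: the contribution $\br{D/s}$ has maximal degree $M(s)=\alpha(s)-\beta(s)+2(\sharp D(s)-1)$, and since re-splicing a single real crossing alters the number of loops by at most one, flipping crossings one at a time from $s_A$ gives $M(s)\le M(s_A)=c(D)+2(\sharp D(s_A)-1)$ for every state $s$. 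Pseudo-adequacy forces each such flip to be non-increasing on the loop count, whence $M(s)<M(s_A)$ for $s\ne s_A$, so the $s_A$-contribution, whose leading coefficient is $(-1)^{\sharp D(s_A)-1}=\pm1$, is not cancelled; the symmetric statement at $s_B$ controls $\udeg\br{D}$, and Lemma \ref{Atom_LemEulercha} converts $\sharp D(s_A)+\sharp D(s_B)$ into $\chi(D)+c(D)$ to produce the claimed span.

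To verify pseudo-adequacy of $D$ I would compare $D(s_A)$ with $D'(s_A)$, which coincide outside a neighborhood of $p$, where the A-splice of $p$ in $D'$ is replaced by the virtual crossing $vp$ in $D$. Locally this is a purely combinatorial change: the A-splice, the B-splice, and the transverse virtual crossing realize the three possible pairings of the four strand-ends at $p$, so passing from $D'(s_A)$ to $D(s_A)$ either merges the two $s_A$-loops of $D'$ meeting $p$ into the single loop $C_A$, or splits one $s_A$-loop into the loops meeting $vp$; the remaining pairing is exactly the one that would make flipping $p$ increase the loop count, and is excluded by pseudo-adequacy of $D'$ at $p$.

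Now fix a real crossing $q\ne p$ and consider its flip from $s_A$. If $\gamma_q$ joins two distinct loops of $D(s_A)$, the flip merges them and the loop count drops, so $q$ is harmless. Otherwise both endpoints of $\gamma_q$ lie on a single loop $C$. If $C$ is disjoint from $vp$, then $C$ is unaffected by the rerouting at $p$ and coincides with a loop of $D'(s_A)$, so the flip acts exactly as in $D'$ and, by pseudo-adequacy of $D'$, is non-increasing. If $C\subseteq C_A$ passes through $vp$, the hypothesis is invoked: a connecting arc with both endpoints on one loop increases the loop count under a flip precisely in the non-admissible orientation pattern of Definition \ref{DefAdmissible}(2), so admissibility of all such arcs guarantees these flips are non-increasing as well. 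This establishes condition (a) of pseudo-adequacy for $D$; running the identical argument with $s_B$, $C_B$, and the B-side admissibility hypothesis yields condition (b), so $D$ is pseudo-adequate.

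The heart of the proof, and its main obstacle, is the dictionary identifying the orientation condition in Definition \ref{DefAdmissible}(2) with the loop-count behavior of a single flip, together with the bookkeeping at $p$: one must confirm that virtualizing $p$ changes the flip behavior of no crossing other than those whose connecting arc lands on $C_A$ (resp.\ $C_B$), so that the admissibility hypothesis is neither too weak nor superfluous. Some care is also needed when $vp$ lies on two distinct $s_A$-loops (the splitting case), where $C_A$ must be read as the union of the loops through $vp$; there one checks that an arc with endpoints on two different such loops falls under Definition \ref{DefAdmissible}(1) and is automatically admissible, so the argument goes through unchanged.
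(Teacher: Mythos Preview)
Your proposal is correct and follows essentially the same route as the paper: establish the dictionary between admissibility of a connecting arc and the non-increase of the loop count under the corresponding single flip, then verify that every connecting arc of $D(s_A)$ and $D(s_B)$ is admissible---using pseudo-adequacy of $D'$ for arcs with at least one endpoint off $C_A$ (resp.\ $C_B$) and the hypothesis for the rest---so that $D$ is pseudo-adequate and Theorem~\ref{Atom_SpanPseudoAdequate} applies. Your three-case split (two distinct loops; same loop away from $vp$; same loop $C_A$) is just a finer partition of the paper's dichotomy, and your remarks on the splitting case make explicit a point the paper leaves implicit.
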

\begin{cor} \label{CorClassicalVAdequate}
Let $D$ be a v-adequate diagram obtained from an adequate classical diagram $D'$.
Then $D$ does not represent any classical link.
\end{cor}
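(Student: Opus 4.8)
The plan is to apply Theorem \ref{Atom_SpanVadequate} to compute $\Span\br{D}$ exactly and then read off its residue modulo $4$; Lemma \ref{ClassicalLemMultipleOf4} will then finish the job. First I would verify the hypotheses of Theorem \ref{Atom_SpanVadequate}. Since $D'$ is adequate it is in particular pseudo-adequate, so $D$ is v-pseudo-adequate. The remaining hypothesis is that every connecting arc of $D(s_A)$ (resp.\ $D(s_B)$) with both endpoints on the component $C_A$ (resp.\ $C_B$) through $vp$ is admissible. This is where the classicality of $D'$ enters: because $D'$ is a classical diagram, $D'(s_A)$ and $D'(s_B)$ are disjoint families of simple closed curves in the plane, and adequacy forces every connecting arc of $D'(s_A)$ to join two \emph{distinct} such curves. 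After virtualizing $p$ the only arcs that can come to have both endpoints on one component are those joining the two curves that merge into $C_A$, and for planar curves this merging produces exactly the orientation pattern of case (2) in Figure \ref{image_admissible}. Hence the admissibility hypothesis holds and Theorem \ref{Atom_SpanVadequate} gives $\Span\br{D} = 4c(D) + 2(\chi(D)-2)$.

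Next I would pin down the parity of $\chi(D)$. The all-A (resp.\ all-B) states of $D$ and $D'$ differ only at $p$, where the A-smoothing (resp.\ B-smoothing) is replaced by the virtual crossing $vp$, i.e.\ by the diagonal reconnection of the four local strand-ends. A-adequacy of $D'$ means the two strands produced by the A-smoothing at $p$ lie on two distinct loops of $D'(s_A)$; a short trace of the four endpoints then shows that replacing this smoothing by the diagonal reconnection \emph{merges} those two loops into one, so $\sharp D(s_A) = \sharp D'(s_A) - 1$, and symmetrically $\sharp D(s_B) = \sharp D'(s_B) - 1$. Feeding these into Lemma \ref{Atom_LemEulercha} for $D$ and for $D'$, together with $c(D) = c(D')-1$, yields
\[
\chi(D) = \sharp D(s_A) + \sharp D(s_B) - c(D) = \bigl(\chi(D') + c(D')\bigr) - 2 - (c(D')-1) = \chi(D') - 1 .
\]
Since $D'$ is classical its Turaev surface $T_{D'}$ is orientable, so $\chi(D')$ is even and therefore $\chi(D)$ is \emph{odd}.

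Finally I would combine the two computations: $\Span\br{D} = 4c(D) + 2\chi(D) - 4 \equiv 2\chi(D) \equiv 2 \pmod 4$, because $\chi(D)$ is odd. Thus $\Span\br{D}$ is not divisible by $4$. If $D$ represented a classical link $L$, then $\Span(L) = \Span\br{D}$ would be divisible by $4$ by Lemma \ref{ClassicalLemMultipleOf4}, a contradiction; hence $D$ represents no classical link. I expect the main obstacle to be the admissibility check in the first paragraph: the parity computation is essentially bookkeeping once the loop-merging is understood, but showing that the merged connecting arcs genuinely fall under case (2) of Figure \ref{image_admissible} requires using the planarity of $D'(s_A)$ and $D'(s_B)$ carefully, and this is the step on which the applicability of Theorem \ref{Atom_SpanVadequate} really rests.
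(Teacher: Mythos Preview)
Your proof is correct and follows essentially the same route as the paper: verify the admissibility hypothesis of Theorem~\ref{Atom_SpanVadequate} (the paper phrases this as ``$C_A$ and $C_B$ contain exactly one virtual crossing,'' which is exactly your planar-merging observation), apply that theorem to get $\Span\br{D}=4c(D)+2(\chi(D)-2)$, then show this is $\equiv 2\pmod 4$ and invoke Lemma~\ref{ClassicalLemMultipleOf4}. The only minor deviation is in the parity step: the paper applies Theorem~\ref{Atom_SpanPseudoAdequate} to $D'$ and uses Lemma~\ref{ClassicalLemMultipleOf4} on $\Span\br{D'}$ to see that $\Span\br{D}=\Span\br{D'}\pm 6$ is not divisible by~$4$, whereas you obtain $\chi(D')$ even directly from the orientability of $T_{D'}$ (stated as a remark in the paper) --- both routes amount to the same parity computation.
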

\begin{proof}
Any connecting arcs of $D(s_A)$ and $D(s_B)$ whose endpoints are both on $C_A$ and $C_B$ 
(Theorem \ref{Atom_SpanVadequate}) are admissible, 
because both $C_A$ and $C_B$ contain exactly one virtual crossing; see Figure \ref{image_admissible}. 
Theorem \ref{Atom_SpanVadequate} therefore implies $\Span\br{D} = 4c(D) + 2(\chi(D) -2)$. 
On the other hand, 
because $D'$ is adequate (and hence pseudo-adequate), 
Theorem \ref{Atom_SpanPseudoAdequate} implies $\Span\br{D'} = 4c(D') + 2(\chi(D')-2)$.
Since moreover $c(D') = c(D) + 1$, $\sharp D'(s_A) = \sharp D(s_A) +1$ and $\sharp D'(s_B) = \sharp D(s_B) +1$,
we obtain $\chi(D') = \chi(D) +1$ by Lemma \ref{Atom_LemEulercha}.
Thus 
	\begin{align*}
	\Span\br{D'} 
		& = 4(c(D) -1) + 2(\chi(D)-1-2) = 4c(D) + 2(\chi(D)-2) -6 \\
		& = \Span\br{D} -6.
	\end{align*}
Since $\Span\br{D'}$ is divisible by $4$ by Lemma \ref{ClassicalLemMultipleOf4}, 
$\Span\br{D}$ cannot be divided by $4$. 
Thus $D$ cannot represent any classical link.
\end{proof}

\begin{rem}
In his master's thesis, Kishino \cite{Kishino00} proved that the $\Span\br{D}=4c(D)-2$ if a diagram $D$ is obtained from a proper alternating classical diagram, and hence $D$ does not represent any classical link.
Corollary \ref{CorClassicalVAdequate} generalizes his result; any proper alternating diagram is adequate.
Theorem \ref{Atom_SpanVadequate} has another application to proper alternating virtual diagrams; see \cite{Kamada04}.
\end{rem}

\begin{exam}
The diagram $H_n$ in Figure \ref{image_Pseudo-adequate} is pseudo-adequate and v-pseudo-adequate.
Moreover the endpoints of any connecting arcs of $H_n(s_A)$ (resp.\ $K_n(s_B)$) are admissible.
Thus both Theorems~\ref{Atom_SpanPseudoAdequate} and \ref{Atom_SpanVadequate} can be applied to deduce $\Span\br{H_n} = 4c(H_n) + 2(\chi(H_n) -2)$. 
Since $c(H_n)=n$ and we can see that $\sharp H_n(s_A)=\sharp H_n(s_B)=1$, 
Lemma~\ref{Atom_LemEulercha} tells us that $\chi(H_n)=2-n$ and hence $\Span\br{H_n}=2n$.
In particular, if $n$ is odd, then  $\Span\br{H_n}$ cannot be divided by $4$ and therefore $H_n$ cannot represent any classical links. 
\end{exam}

The rest of this paper is devoted to the proofs of Theorems \ref{Atom_SpanPseudoAdequate} and \ref{Atom_SpanVadequate}.

For arbitrarily chosen real crossings $p_1,\dots,p_j$ ($1\le j\le c(D)$), 
let $s_A(j)$ (resp. $s_B(j)$) be a state of $D$ 
that maps $p_1,\dots,p_j$ to B (resp.\ A) and the other real crossings to A (resp.\ B). 
Note that any $s\in\mathcal{S}$ other than $s_A$ and $s_B$ can be expressed as $s_A(j)$ or $s_B(j)$. 
By definition
	\begin{align}
	\odeg\br{D/s_A(j)} & = c(D) -2j + 2\sharp D(s_A(j)) -2 , \label{MurasugiSpan_ThOdegDsAj}\\
	\udeg\br{D/s_B(j)} & = -c(D) +2j - 2\sharp D(s_B(j)) +2. \label{MurasugiSpan_ThOdegDsBj}
	\end{align}
%
\begin{lem} \label{odegsA(j)<sAudegsB(j)<sB}
If $j \ge 1$, then we have 
	\[
	\odeg\br{D/s_A(j)} \le \odeg\br{D/s_A},\quad 
	\udeg\br{D/s_B(j)} \ge \udeg\br{D/s_B}.
	\]
Thus $\Span\br{D} \le \odeg\br{D/s_A} - \udeg\br{D/s_B}$.
\end{lem}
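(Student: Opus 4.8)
The plan is to derive both inequalities from the explicit degree formulas \eqref{MurasugiSpan_ThOdegDsAj} and \eqref{MurasugiSpan_ThOdegDsBj} together with a single combinatorial observation about how $\sharp D(s)$ changes when one splice is altered. First I would record the two companion formulas obtained by setting $j=0$, namely $\odeg\br{D/s_A} = c(D) + 2\sharp D(s_A) - 2$ and $\udeg\br{D/s_B} = -c(D) - 2\sharp D(s_B) + 2$; these follow from $\alpha(s_A)-\beta(s_A)=c(D)$ and $\alpha(s_B)-\beta(s_B)=-c(D)$ together with the fact that $(-A^2-A^{-2})^{\sharp D(s)-1}$ has top degree $2(\sharp D(s)-1)$ and bottom degree $-2(\sharp D(s)-1)$.

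Subtracting the formulas, the first asserted inequality $\odeg\br{D/s_A(j)} \le \odeg\br{D/s_A}$ is equivalent to $\sharp D(s_A(j)) \le \sharp D(s_A) + j$, and the second inequality $\udeg\br{D/s_B(j)} \ge \udeg\br{D/s_B}$ is equivalent to $\sharp D(s_B(j)) \le \sharp D(s_B) + j$. Thus everything reduces to the claim that re-splicing $j$ real crossings can increase the number of components by at most $j$.

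To prove this claim I would argue one crossing at a time. Re-splicing a single real crossing (from A to B, or vice versa) is a local modification of $D(s)$ inside a disk that meets the rest of the diagram in four arc-ends; it either merges two distinct components into one or splits one component into two, so in either case $\sharp D(\cdot)$ changes by exactly $\pm 1$. Virtual crossings are inert here, since they only record transverse passage in the plane and carry no splicing data, so the $\pm 1$ behaviour survives in the virtual setting. Passing from $s_A$ to $s_A(j)$ by re-splicing $p_1,\dots,p_j$ in turn therefore changes $\sharp D(\cdot)$ by at most $j$ in absolute value, giving $\sharp D(s_A(j)) \le \sharp D(s_A) + j$; the same argument applied to $s_B$ gives the companion inequality. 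This step — verifying the $\pm 1$ behaviour and that it is unaffected by virtual crossings — is the only genuine content, and hence the main point to get right.

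Finally, for the span estimate I would use that every state arises this way: an arbitrary $s\in\mathcal{S}$ equals $s_A(j)$ with $j=\beta(s)$ (re-splicing exactly the crossings sent to B) and also equals $s_B(j)$ with $j=\alpha(s)$. Hence the first inequality gives $\odeg\br{D/s}\le\odeg\br{D/s_A}$ for every $s$, so that $\odeg\br{D} \le \max_{s}\odeg\br{D/s} = \odeg\br{D/s_A}$, using that the maximal degree of a sum of Laurent polynomials is at most the largest of the individual maximal degrees (cancellation can only lower it). Symmetrically $\udeg\br{D}\ge\udeg\br{D/s_B}$. Subtracting yields $\Span\br{D} = \odeg\br{D} - \udeg\br{D} \le \odeg\br{D/s_A} - \udeg\br{D/s_B}$, as required.
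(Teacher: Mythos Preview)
Your approach is essentially the paper's: reduce the degree inequalities to $\sharp D(s_A(j))\le\sharp D(s_A)+j$ (and the $B$-analogue), prove this by changing one splice at a time, and then take a maximum/minimum over all states.

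One point needs correcting. You assert that re-splicing a single real crossing ``either merges two distinct components into one or splits one component into two, so \dots\ $\sharp D(\cdot)$ changes by exactly $\pm 1$,'' and that this ``survives in the virtual setting'' because virtual crossings carry no splicing data. The conclusion is false for virtual diagrams: the change can be $0$. Abstractly, the rest of $D(s)$ induces a fixed-point-free pairing on the four local arc-ends; the pairing $\{\mathrm{NW},\mathrm{SE}\},\{\mathrm{NE},\mathrm{SW}\}$ makes both the $A$- and the $B$-splice yield a single component. In the classical case this pairing is excluded by planarity of $D(s)$ (a Jordan-curve argument), not by the mere absence of splicing data at virtual crossings, so your justification does not transfer. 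The paper's figure for this lemma in fact displays exactly this third case, with a virtual crossing forcing the ``diagonal'' pairing.

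This does not damage your argument: you only use the upper bound, and $\lvert\sharp D(s_A(j))-\sharp D(s_A(j-1))\rvert\le 1$ is all that is needed for $\sharp D(s_A(j))\le\sharp D(s_A)+j$. Replace ``exactly $\pm 1$'' by ``an element of $\{-1,0,+1\}$'' and the proof goes through unchanged.
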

\begin{proof}
Figure \ref{image_sA(j-1)sA(j)} shows 
	\begin{align}
	\label{MurasugiSpan_claim_sA(k-1)sA(k)}
	\sharp D(s_A(j-1))-1 \le \sharp D(s_A(j)) \le \sharp D(s_A(j-1))+1.
	\end{align}
	\begin{figure}
	\centering
	\input{sA_j-1_sA_j}
	\caption{Proof of \eqref{MurasugiSpan_claim_sA(k-1)sA(k)}}
	\label{image_sA(j-1)sA(j)}
	\end{figure}
By \eqref{MurasugiSpan_claim_sA(k-1)sA(k)} we inductively deduce 
	\begin{equation}\label{sharpD(sA(j)<sharpD(sA)+j}
	 \sharp D(s_A(j)) 
	 \le \sharp D(s_A(j-1))+1 
	 \le \sharp D(s_A(j-2))+2 
	 \le\dots\le
	 \sharp D(s_A) + j. 
	\end{equation}
Similarly, we have 
	\begin{equation}
	\sharp D(s_B(j)) \le \sharp D(s_B) + j. \label{sharpD(sB(j)<sharpD(sB)+j}
	\end{equation}
By definition
	\begin{align}
	\odeg\br{D/s_A} & =  c(D) + 2\sharp D(s_A) -2, \label{MurasugiSpan_ThOdegDsA}\\
	\udeg\br{D/s_B} & = -c(D) - 2\sharp D(s_B) +2. \label{MurasugiSpan_ThOdegDsB}
	\end{align}
By \eqref{sharpD(sA(j)<sharpD(sA)+j} and \eqref{sharpD(sB(j)<sharpD(sB)+j}, 
	\begin{align*}
	\odeg\br{D/s_A(j)} 
	& = c(D) - 2j + 2\sharp D(s_A(j)) -2 \\ 
	& \le c(D) + 2\sharp D(s_A) -2, \nonumber \\
	\udeg\br{D/s_B(j)} 
	& = -c(D) + 2j - 2\sharp D(s_B(j)) +2 \\ 
	& \ge -c(D) - 2\sharp D(s_B) +2 \nonumber.
	\end{align*}
These inequalities and \eqref{MurasugiSpan_ThOdegDsA}, \eqref{MurasugiSpan_ThOdegDsB} imply 
\[
 \odeg\br{D} \le \odeg\br{D/s_A}
 \quad\text{and}\quad
 \udeg\br{D} \ge \udeg\br{D/s_B}.
\]
Thus we have $\Span\br{D} \le \odeg\br{D/s_A} - \udeg\br{D/s_B}$.
\end{proof} 
%
%
\begin{proof}[Proof of Theorem \ref{Atom_ThAtomProperSpan}]
Lemma \ref{Atom_LemEulercha} and Lemma \ref{odegsA(j)<sAudegsB(j)<sB} imply
\begin{align}\label{AtomSpanInequality}
    \Span\br{D}
    & \le \odeg\br{D/s_A} - \udeg\br{D/s_B}  \\
    & = 2c(D) + 2(\sharp D(s_A) + \sharp D(s_B)) -4 \notag  \\
    & = 2c(D) + 2(\chi(D) + c(D)) -4 \notag\\
    & = 4c(D) + 2(\chi(D) -2) .\qedhere
\end{align}
\end{proof}
%
%
%
\begin{proof}[Proof of Theorem~\ref{Atom_SpanPseudoAdequate}]
Pseudo-adequacy of $D$ implies
	\begin{equation}\label{eq:D(s(1))=<D(s)}
	 \sharp D(s_A(1)) \le \sharp D(s_A),
	 \quad
	 \sharp D(s_B(1)) \le \sharp D(s_B).
	\end{equation}
Thus \eqref{sharpD(sA(j)<sharpD(sA)+j} and \eqref{sharpD(sB(j)<sharpD(sB)+j} can be sharpend in this case as 
	\begin{align} \label{Atom_claimApropersAsB}
	\sharp D(s_A(j)) \le \sharp D(s_A) + j-1, \quad \sharp D(s_B(j)) \le \sharp D(s_B) + j-1.
	\end{align}
\eqref{MurasugiSpan_ThOdegDsA}, \eqref{MurasugiSpan_ThOdegDsB} and \eqref{Atom_claimApropersAsB} imply
	\begin{align}
	\odeg\br{D/s_A(j)} 
	& = c(D) - 2j + 2\sharp D(s_A(j)) -2 \nonumber \\
	& \le c(D) + 2\sharp D(s_A) -4, \label{Atom_3_ThAproperOdegDsAj} \\
	\udeg\br{D/s_B(j)} 
	& = -c(D) + 2j - 2\sharp D(s_B(j)) +2 \nonumber \\
	& \ge -c(D) - 2\sharp D(s_B) +4.\label{Atom_3_ThAproperOdegDsBj}
	\end{align}
These estimations deduce $\odeg\br{D/s_A(j)}<\odeg\br{D/s_A}$ and $\udeg\br{D/s_B(j)}>\udeg\br{D/s_B}$ for any choices of $p_1,\dots,p_j$ ($j\ge 1$), and hence 
we have $\odeg\br{D} = \odeg\br{D/s_A}$ and $\udeg\br{D} = \udeg\br{D/s_B}$. 
Thus the inequality in \eqref{AtomSpanInequality} becomes an equality. 
\end{proof}

	\begin{proof}[Proof of Theorem \ref{Atom_SpanVadequate}]
In general, 
it is not hard to see that a connecting arc in $D(s)$ ($s\in\mathcal{S}$) corresponing to a real crossing $p$ is admissible 
if and only if $\sharp D(s(1)) \le \sharp D(s)$
where $s(1)$ is the state of $D$ obtained from $s$ 
by changing splice at $p$. 
Thus if all the connecting arcs of $D(s_A)$ and those of $D(s_B)$ are admissible, then $D$ is pseudo-adequate.

If $D$ is such a diagram as in Theorem \ref{Atom_SpanVadequate}, all the connecting arcs of $D(s_A)$ and $D(s_B)$ are admissible;
because $D'$ is (pseudo-)adequate,
the connecting arcs one of whose endpoints is not on $C_A$ nor $C_B$ are admissible.
By assumption the other arcs are also admissible. 
\end{proof}
%
%
\section*{Acknowledgements}
The authors express their sincere appreciation to Professor Naoko Kamada and Professor Shin Satoh for their fruitful comments and encouragements.
KS is partially supported by JSPS KAKENHI Grant Number 16K05144.

\end{document}